\newtheorem{theorem}{Theorem}
\newtheorem{lemma}[theorem]{Lemma}
\newtheorem{proposition}[theorem]{Proposition}
\newtheorem{corollary}[theorem]{Corollary}
\newtheorem{remark}[theorem]{Remark}
\newcommand{\diffto}{\xrightarrow{\raisebox{-0.2 em}[0pt][0pt]{\smash{\ensuremath{\sim}}}}}
\newcommand{\R}{\mathbb{R}}
\newcommand{\N}{\mathbb{N}}
\newcommand{\norm}[1]{\|#1\|}
\DeclareMathOperator{\Lie}{\mathcal{L}}
\newcommand{\Addresses}{{% additional braces for segregating \footnotesize
  \bigskip
  Ioan M\u{a}rcu\cb{t},\par\nopagebreak
 
\textsc{Radboud University Nijmegen, {\footnotesize 6500 GL Nijmegen, The Netherlands}}\par\nopagebreak
  \textit{E-mail address}: \texttt{i.marcut@math.ru.nl}
}}
\title{Poisson structures whose Poisson diffeomorphism group is not locally path-connected}
\author{Ioan M\u{a}rcu\cb{t}}
\begin{document}
\date{}
\maketitle

\begin{abstract}
We build examples of Poisson structure whose Poisson diffeomorphism group is not locally path-connected. 
\end{abstract}

\section*{Introduction}

Let $(M,\pi)$ be a Poisson manifold. The Poisson diffeomorphisms of $(M,\pi)$
\[\mathrm{Poiss}(M,\pi):=\left\{\phi:M\diffto M\, : \, \phi_*(\pi)=\pi\right\}\subset \mathrm{Diff}(M)\]
form a $C^1$-closed subgroup of the group of all diffeomorphisms. In analogy to Cartan's closed-subgroup theorem for finite-dimensional Lie groups, a natural question is whether $\mathrm{Poiss}(M,\pi)$ is, in some sense, a smooth subgroup of $\mathrm{Diff}(M)$. To simplify the discussion, we consider only the compact case; a convenient setting for dealing with the non-compact case was developed in \cite{Michor}. If $M$ is compact, then $\mathrm{Diff}(M)$ endowed with the $C^{\infty}$-topology has the structure of a Fr\'echet Lie group with Lie algebra the space of all vector fields $\mathfrak{X}(M)$ \cite{Hamilton,Milnor}. A suitable candidate for the Lie algebra of $\mathrm{Poiss}(M,\pi)$ is formed by the Poisson vector fields:
\[\mathfrak{poiss}(M,\pi):=\left\{ X\in \mathfrak{X}(M)\, : \, \Lie_X(\pi)=0\right\}\subset \mathfrak{X}(M).\]
% If $M$ is not compact, $\mathrm{Diff}(M)$ is not locally path-connected when endowed with the Whitney $C^{\infty}$-topology. However, by putting elements that differ on non-compact subsets in different connected components, one can define a finer topology, for which $\mathrm{Diff}(M)$ becomes a Lie group with Lie algebra the space of compactly supported vector fields $\mathfrak{X}_c(M)$ \cite{Michor}. By endowing $\mathrm{Poiss}(M,\pi)$ with the induced topology, the natural candidate of its Lie algebra is the space of compactly supported Poisson vector fields $\mathfrak{poiss}_c(M,\pi)$. 
Very little is known about whether $\mathrm{Poiss}(M,\pi)$ is a Lie subgroup of $\mathrm{Diff}(M)$, but the answer to this question must depend very much on the nature of the Poisson structure. In the symplectic case, it was proven by Weinstein that the symplectomorphism group is a Lie subgroup, and the local charts were obtained as a consequence of the his Lagrangian tubular neighborhood theorem applied to the diagonal in $M\times \overline{M}$ \cite{Weinstein}. For a regular Poisson manifold $(M,\pi)$, using a foliated version of Weinstein's argument, a Lie group structure can be constructed on the group $\mathrm{Fol}(M,\pi)$ consisting of Poisson diffeomorphisms that send each symplectic leaf to itself \cite{Rybicki} (see also \cite{Banyaga} for a more restrictive version of this result). In general, this subgroup is not closed (as an example, consider an invariant Poisson structure on the 3-torus with dense leaves), therefore the natural topology on this subgroup from \cite{Rybicki} is in general finer than the $C^{\infty}$-topology (compared to \cite{Banyaga}).

In this note, we construct Poisson structures whose Poisson diffeomorphism group is not locally path-connected, and so it is not a Lie group with the $C^{\infty}$-topology. Our first example is compactly supported in a 2-disk, and can be grafted using a local chart on any compact surface. By considering products with other Poisson manifolds, one obtains compact examples in higher dimensions. The second example is a regular Poisson structure given as a family of symplectic structures on a surface, with parameter space a second surface. Also in this case, by applying standard modifications, one can obtain compact regular examples in higher dimensions and codimensions. However, we do not know whether there exist co-rank one Poisson structures whose Poisson diffeomorphism group is not locally path-connected. Our examples are not analytic, it would be interesting to investigate whether analytic Poisson structures can have a Poisson diffeomorphism group which is not locally path-connected. Another interesting phenomenon that we observe in our examples is that any neighborhood of the identity intersects an uncountable number of path-components of the Poisson diffeomorphism group; it would be interesting to know whether examples without this feature exist.\\

\noindent \textbf{Acknowledgements.} I have learned about the question of whether the Poisson diffeomorphism group is locally path-connected from Rui Loja Fernandes, and I have talked with Jo\~ao Nuno Mestre about this problem and the specific construction presented in the paper. I would like to thank both of them for these useful discussions.

\section{The first example}

Let $\Sigma$ be a compact surface. Fix a closed disk $B\subset \Sigma$ contained in a coordinate chart $x=(x_1,x_2)$ which identifies $B$ with the closed disk of radius 1 around 0. We construct a bivector $\pi$ on $\Sigma$ whose support is contained in the interior of $B$ and consists of a disjoint union of disks accumulating at $0$. The centers of the disks are arranged on circles of decreasing radii around $0$. For each circle, we build a Poisson diffeomorphism which rotates the disks on that circle, and fixes all other disks and the exterior of $B$. We obtain a sequence of Poisson diffeomorphisms converging to the identity, such that none of its elements can be joined continuously via Poisson diffeomorphisms to the identity. The only cumbersome part of the construction is to fix all coefficients such that everything converges in the $C^{\infty}$-topology. 

For the analysis involved, we use the notation: for $a=(a_1,a_2)\in \N^2$, let
\[|a|:=a_1+a_2, \ \ \ \ D^a:=\frac{1}{a_1!}\frac{1}{a_2!}\partial_{x_1}^{a_1}\partial_{x_2}^{a_2}.\]
Define the $C^k$-norm of a function $f\in C^{\infty}(B)$ as:
\[\norm{f}_k:=\sup\left\{\big|D^af(x)\big|\  : \  x\in B, \ a\in \N^2, \ |a|\leq k\right\}.\]
The $C^k$-norms extend to vector-valued maps or more general tensors as the maximum of the $C^k$-norms of the components.

We fix a smooth function $\chi:\R\to [0,1]$, which will be used in the construction of the Poisson structure and of the diffeomorphisms, with the following properties: 
\[\chi(t)=\begin{cases} 0, & 1\leq |t| \\
>0,  & 1/2 < |t|<1\\
1,  & 0 \leq |t|\leq 1/2\\
\end{cases}.\]
Since we work in dimension two, any smooth bivector field $f(x_1,x_2)\partial_{x_1}\wedge \partial_{x_2}$ is a Poisson structure. For $p\in \R^2$ and $\delta>0$, define
\[\pi_{p,\delta}(x):=\chi\left(\frac{|x-p|}{\delta}\right)\partial_{x_1}\wedge\partial_{x_2}.\]
Note that $\mathrm{supp}(\pi_{p,\delta})=\overline{B}_\delta(p)$. We will construct the Poisson structure as an infinite linear combination of bivectors of the form $\pi_{p,\delta}$. To insure convergence, we will use the following: 

\begin{lemma}\label{lemma_bivector}
For each $k\geq 0$, there is $C_k>0$ depending on $\chi$, such that 
\[\norm{\pi_{p,\delta}}_{k}\leq C_k \delta^{-k}, \ \ \forall \ p\in \R^2,\ \ 0<\delta.\]
\end{lemma}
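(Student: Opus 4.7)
My approach is to reduce the whole family $\{\pi_{p,\delta}\}$ to a single fixed reference bivector by a simple rescaling. I would introduce the change of variables $y := (x-p)/\delta$ and set
\[ h(y) := \chi(|y|), \qquad y\in\R^2. \]
The first observation to make is that $h$ is a fixed smooth function on $\R^2$, compactly supported in the closed unit disk. Smoothness away from the origin is clear since $|y|$ is smooth there; smoothness \emph{at} the origin is the one subtle point, and it follows from the fact that $\chi$ is identically $1$ on a neighborhood of $0$, so near $y=0$ the function $h$ is simply the constant $1$ and the non-smoothness of $|y|$ is invisible. Consequently each derivative $D^a h$ is globally bounded on $\R^2$ by a finite constant depending only on $a$ and $\chi$.

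Next I would compute the derivatives of $\pi_{p,\delta}$ directly by the chain rule. Since the dependence on $x$ enters only through $y=(x-p)/\delta$, each application of $\partial_{x_j}$ contributes a factor of $\delta^{-1}$, and a straightforward induction yields
\[ D^a\pi_{p,\delta}(x) \;=\; \delta^{-|a|}\,(D^a h)(y)\,\partial_{x_1}\wedge\partial_{x_2}. \]
Taking sup norms and defining
\[ C_k := \max_{|a|\leq k}\,\sup_{y\in\R^2}\bigl|(D^a h)(y)\bigr|, \]
which is finite by the previous paragraph, I obtain $\|\pi_{p,\delta}\|_k \leq \max_{|a|\leq k}\delta^{-|a|}C_k$. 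Since in the construction the disks $\overline{B}_\delta(p)$ must lie inside $B$ and so $\delta\leq 1$, we have $\delta^{-|a|}\leq \delta^{-k}$ for $|a|\leq k$, and the claimed bound $\|\pi_{p,\delta}\|_k\leq C_k\delta^{-k}$ follows.

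The only real obstacle is the smoothness point at the origin; once one recognizes that the plateau condition $\chi\equiv 1$ near $0$ removes it, the remainder is a mechanical change-of-variables computation that requires no further analysis of $\chi$.
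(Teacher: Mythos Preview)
Your proof is correct and in fact cleaner than the paper's. The paper proceeds by first establishing the auxiliary estimate $|D^a(|x|)|\leq C_k|x|^{1-k}$ via the explicit form $D^a(|x|)=|x|^{1-2k}p_a(x)$ with $p_a$ homogeneous of degree $k$, then feeds this into the multivariate Fa\`a di Bruno formula for $\chi(|x|/\delta)$, and finally observes that on the region $|x|<\delta/2$ all the $\chi^{(i)}$ vanish while on $|x|\geq \delta/2$ the factors $|x|^{i-k}$ are controlled by powers of $\delta$. Your route bypasses all of this: the substitution $y=(x-p)/\delta$ reduces everything to the single fixed function $h(y)=\chi(|y|)$, whose smoothness you correctly justify using the plateau $\chi\equiv 1$ near $0$, and then the scaling of derivatives under an affine change of variables gives the $\delta^{-|a|}$ factor in one stroke. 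What you gain is brevity and conceptual clarity; what the paper's approach gains is that its intermediate inequality \eqref{chain_rule} is reused almost verbatim in the proof of the next lemma (Lemma~\ref{lemma_inequality}), where the argument of $\chi$ is $4n(n|x|-1)$ rather than a pure dilation, so a simple rescaling is not available.

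One small remark: your final step $\delta^{-|a|}\leq \delta^{-k}$ indeed needs $\delta\leq 1$, and you correctly flag this. Strictly speaking the inequality in the lemma as stated cannot hold uniformly for all $\delta>0$ (the $C^0$-norm of $\pi_{p,\delta}$ equals $1$, while $C_k\delta^{-k}\to 0$ as $\delta\to\infty$ for $k\geq 1$); the paper's own proof has the same tacit restriction, and only the regime $\delta\leq 1$ is ever used in the construction.
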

\begin{proof}
We claim that, for any $a\in \N^2$ with $|a|=k$, there is $C_k>0$ such that 
\begin{equation}\label{ineq_length}
    \left|D^a(|x|)\right|\leq C_k|x|^{1-k}.
\end{equation}
For this, one shows inductively that there is are homogeneous polynomials of degree $k$, denoted  $p_a(x_1,x_2)$, such that 
\[D^a(|x|)=|x|^{1-2k}p_a(x_1,x_2),
\]
and then one uses the obvious estimate:
\[|p_a(x_1,x_2)|\leq C_k|x|^k.\]

For $k=0$, the statement is obvious. Since the $C^k$-norms are translation-invariant, it suffices to prove the inequalities for $\pi_{0,\delta}$. Fix $a\in \N^2$ with $|a|=k>0$. By the general chain rule, we have: 
\begin{equation*}
    D^a\left(\chi(|x|/\delta)\right)=\sum_{a^1+\ldots+a^i=a}\chi^{(i)}(|x|/\delta)\delta^{-i} D^{a^1}(|x|)\ldots D^{a^i}(|x|),
\end{equation*}
where the sum runs over all decomposition of $a$ into elements $a^j\in \N^2$, with $|a^j|\geq 1$. 
Applying \eqref{ineq_length}, we obtain
\begin{equation}\label{chain_rule}
    \left|D^a\left(\chi(|x|/\delta)\right)\right|\leq C_a\sum_{1\leq i\leq k}|\chi^{(i)}(|x|/\delta)|\,\delta^{-i} |x|^{i-k},
\end{equation}
for some constant $C_a>0$. If $|x|<\delta/2$ we have that $\chi^{(i)}(|x|/\delta)=0$, so all elements vanish. For $\delta/2\leq |x|$, \eqref{chain_rule} gives the estimate from the statement with $C_k$ depending on $k$ and on the norm $\norm{\chi}_k$.
\end{proof}

For $n\geq 4$, denote the corners of the regular $2^n$-gon lying on the circle of radius $1/n$ around the origin as follows: 
%\[p(n,s):=\frac{1}{n}\left(\cos\left(\frac{2 \pi s}{2^{n}} \right),\sin\left(\frac{2 \pi s}{2^{n}} \right)\right), \ \ \ 1\leq s\leq 2^n,\]
\[p(n,s):=\frac{1}{n}\exp\left(\frac{2 \pi s }{2^{n}}i\right), \ \ \ 1\leq s\leq 2^n,\]
where we identify $\R^2=\mathbb{C}$, and let \[\delta_n:=\frac{1}{n 2^n}.\] 
The numbers $\delta_n$ were chosen such that the closed disks $\overline{B}_{\delta_n}(p(n,s))$, for distinct $s$, do not intersect. This follows, because:
\[|p(n,s+1)-p(n,s)|=\frac{2}{n}\sin\left(\frac{\pi}{2^n}\right)>\frac{2}{n2^n}=2\delta_n.\]
Consider also the closed annuli $E_n\subset F_n$ given by:
\begin{align*}
    E_n&:=\left\{x\in \R^2\ :\ \frac{1}{n}-\frac{1}{4n^2}\leq |x|\leq \frac{1}{n}+\frac{1}{4n^2}\right\}\\
    F_n&:=\left\{x\in \R^2\ :\ \frac{1}{n}-\frac{1}{2n^2}\leq |x|\leq \frac{1}{n}+\frac{1}{2n^2}\right\}.
\end{align*}
The following is straightforward:
\begin{lemma}\label{lemma_annuli}
For all $n,m\geq 4$, with $n\neq m$, and all $1\leq s\leq 2^n$, we have that:
\[\overline{B}_{\delta_n}(p(n,s))\subset E_n\ \ \textrm{and}\ \ E_n\cap F_m=\emptyset.\] 
\end{lemma}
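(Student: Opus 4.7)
The plan is to verify the two claims separately, both via elementary inequalities involving $n$ and $m$.

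For the inclusion $\overline{B}_{\delta_n}(p(n,s))\subset E_n$, the key observation is that $|p(n,s)|=1/n$, so the triangle inequality gives $\bigl||x|-1/n\bigr|\leq \delta_n$ for every $x$ in the closed ball. The inclusion then reduces to the estimate $\delta_n = 1/(n2^n)\leq 1/(4n^2)$, equivalently $4n\leq 2^n$. This holds at $n=4$ with equality, and extends to all $n\geq 4$ by a trivial induction.

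For the disjointness $E_n\cap F_m=\emptyset$, since both sets are annuli about the origin, it suffices to show that the radial intervals $[1/n-1/(4n^2),\,1/n+1/(4n^2)]$ and $[1/m-1/(2m^2),\,1/m+1/(2m^2)]$ are disjoint, i.e.\ $|1/n-1/m|> 1/(4n^2)+1/(2m^2)$. Clearing denominators, this becomes
\[
4nm\,|n-m|>m^2+2n^2.
\]
I would verify this by splitting into the two cases $m=n+k$ and $n=m+k$ with integer $k\geq 1$, and expanding. The first case rewrites as
\[
n^2(4k-3)+k\bigl((4n-1)k-2n\bigr)>0,
\]
a sum of two positive summands for $n\geq 1$ and $k\geq 1$. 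The second case rewrites as
\[
m^2(4k-3)+2k\bigl((2m-1)k-2m\bigr)>0;
\]
here the second summand is $\geq -2$ in the worst subcase $k=1$, so one invokes the hypothesis $m\geq 4$ to conclude $m^2-2\geq 14>0$.

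The only real obstacle is the slight asymmetry of the target inequality, due to the different half-widths $1/(4n^2)$ for $E_n$ and $1/(2m^2)$ for $F_m$; consequently the cases $n<m$ and $n>m$ are not interchangeable and must be handled separately. The hypothesis $n,m\geq 4$ is comfortably enough for both cases, and is in any event required for the first inclusion.
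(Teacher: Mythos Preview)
Your proof is correct. The paper itself declares the lemma ``straightforward'' and gives no argument, so you have simply supplied the elementary verification it omits; there is no alternative approach to compare against.
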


For $n\geq 4$, define

\[\pi_n:=\frac{1}{n!}\sum_{s=1}^{2^n}\pi_{p(n,s),\delta_{n}}.\]
Using Lemma \eqref{lemma_bivector}, and that the supports of the bivectors involved in the sum are disjoint, we obtain for each $k\geq 0$ a constant $C_k>0$, such that  
\[\norm{\pi_n}_k\leq C_k\frac{n^k2^{nk}}{n!}.\]
This implies that the series 
$\sum_n \pi_n$ 
converges uniformly in all $C^k$-norms. Consider the Poisson structure $\pi$ on $\Sigma$ given by
\[\pi|_B=\sum_{n=4}^{\infty}\pi_n\ \ \ \textrm{and}\ \ \ \pi|_{\Sigma\backslash B}=0.\]
The support of $\pi$ is the compact set
\[K=\{0\}\cup \bigcup_{4\leq n}\bigcup_{1\leq s\leq 2^n}\overline{B}_{\delta_{n}}(p(n,s)).\]

Next, define $\phi_n\in\mathrm{Diff}(\Sigma)$, $n\geq 4$, such that $\phi_n|_{\Sigma\backslash B}=\mathrm{id}$ and for $x\in B$
\[\phi_n(x):=x\cdot \exp\left(\frac{2\pi}{2^n}i\, \chi\big(4n(n|x|-1)\big)\right),\]
where we identify $\R^2=\mathbb{C}$. 

The following implies that $\mathrm{Poiss}(\Sigma,\pi)$ is not locally path-connected. 
\begin{proposition}\label{prop}
The diffeomorphisms $\phi_n$ satisfy the following:
\begin{itemize}
    \item[(a)] $\phi_n$ preserves $E_n$ and $\phi_n|_{E_n}$ is the rotation through an angle of $\frac{2\pi}{2^{n}}$.
    \item[(b)] $\phi_n$ preserves $F_n$ and 
    $\phi_n$ is supported in $F_n$.
    \item[(c)]
    $\phi_n\in\mathrm{Poiss}(\Sigma,\pi)$.
    \item[(d)] There exist no continuous family
\[\phi_n^t\in \mathrm{Poiss}(\Sigma,\pi), \ \ 0\leq t\leq 1,\]
such that $\phi_n^0=\mathrm{id}$ and $\phi_n^1=\phi_n$.
    \item[(e)] The sequence $\phi_n$ converges in all $C^k$-norms to the identity map: \[\lim_{n\to\infty}\phi_n=\mathrm{id}.\]
\end{itemize}
\end{proposition}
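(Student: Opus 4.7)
The plan is to verify (a)--(e) in order, with (d) the core argument. Parts (a) and (b) follow from direct inspection of the defining formula: since $\phi_n(x)$ lies on the circle of radius $|x|$, the map $\phi_n$ preserves every circle centered at $0$ and hence both $E_n$ and $F_n$. Moreover, the properties of $\chi$ together with the scalings in the definition of $\phi_n$ give $\chi(4n(n|x|-1))=1$ on $E_n$ and $\chi(4n(n|x|-1))=0$ outside $F_n$, from which (a) and the last claim of (b) follow.

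For (c), split $(\phi_n)_*\pi=\pi$ into the contributions of the $\pi_m$. By Lemma \ref{lemma_annuli}, for $m\neq n$ the support of $\pi_m$ lies in $E_m\subset\Sigma\setminus F_n$, and $\phi_n=\mathrm{id}$ there, so $(\phi_n)_*\pi_m=\pi_m$. For the term $\pi_n$, write $R$ for the rotation through $2\pi/2^n$. Then $R$ preserves $\partial_{x_1}\wedge\partial_{x_2}$ and sends $p(n,s)$ to $p(n,s+1)$, so $R_*\pi_{p(n,s),\delta_n}=\pi_{p(n,s+1),\delta_n}$; summing and reindexing yields $R_*\pi_n=\pi_n$. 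Since $\pi_n$ is supported in $E_n$ where $\phi_n=R$, we conclude $(\phi_n)_*\pi_n=\pi_n$.

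Part (d) is the heart of the argument and rests on the topology of $K:=\mathrm{supp}(\pi)$. Every Poisson diffeomorphism preserves $K$ and so permutes its connected components. The key observation is that each disk $D:=\overline{B}_{\delta_m}(p(m,s))$ is clopen in $K$: Lemma \ref{lemma_annuli}, together with the gaps between disks on a common ring and between $0$ and any disk, provides a positive lower bound on $\mathrm{dist}(D,K\setminus D)$, so some open neighborhood of $D$ in $\Sigma$ meets $K$ only in $D$. Given a continuous family $\phi_n^t\in \mathrm{Poiss}(\Sigma,\pi)$ with $\phi_n^0=\mathrm{id}$, joint continuity of $(t,x)\mapsto \phi_n^t(x)$ implies that for each $x\in D$ the curve $t\mapsto \phi_n^t(x)$ is a continuous path in $K$ starting in $D$ and therefore stays in $D$. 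Hence $\phi_n^t(D)\subset D$, and applying the same reasoning to the inverse gives $\phi_n^t(D)=D$ for all $t$. At $t=1$ this contradicts (a), since $\phi_n$ sends $\overline{B}_{\delta_n}(p(n,s))$ to $\overline{B}_{\delta_n}(p(n,s+1))$, a different disk.

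Finally, (e) is a direct estimate. Writing $\phi_n(x)=x\,e^{i\alpha_n(x)}$ with $\alpha_n(x):=\frac{2\pi}{2^n}\chi(4n(n|x|-1))$, the function $\alpha_n$ is supported in $F_n$, where $|x|$ is comparable to $1/n$. A chain-rule computation along the lines of Lemma \ref{lemma_bivector}, using \eqref{ineq_length} to control the derivatives of $|x|$, bounds $\norm{\alpha_n}_k$ by $2^{-n}$ times a polynomial in $n$. Hence $\alpha_n\to 0$ in every $C^k$-norm and therefore $\phi_n\to\mathrm{id}$ in $C^\infty$.
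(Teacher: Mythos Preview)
Your proof is correct and follows essentially the same approach as the paper: (a)--(c) and (e) match the paper's arguments almost verbatim (the paper packages (e) as a separate lemma with the explicit bound $\norm{\phi_n-\mathrm{id}}_k\leq C_k n^{2k}/2^n$). For (d) the paper uses the slightly slicker variant of tracking the single point $p(n,1)$ through the open rank-$2$ locus (a disjoint union of open disks) rather than tracking the closed disk $D$ through the support $K$, but the two arguments are interchangeable.
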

\begin{proof}
The properties of $\chi$ imply immediately that $\phi_n$ satisfies (a) and (b). Note that $\pi_n$ is invariant under the rotation through an angle of $\frac{2\pi}{2^n}$. By Lemma \ref{lemma_annuli}, $\mathrm{supp}(\pi_n)\subset E_n$, and so, by (a), $(\phi_n)_*(\pi_n)=\pi_n$. On the other hand, for $n\neq m$, also by Lemma \ref{lemma_annuli} we have that $\mathrm{supp}({\pi}_m)\cap F_n=\emptyset$; thus $(\phi_n)_*(\pi_m)=\pi_m$.
These properties imply item (c). For item (d), note that, for such a family $\phi_n^t$, the rank of $\pi$ along the curve $\phi_n^t(p(n,1))$ would have be constant $2$, and therefore this curve cannot go out of $B_{\delta_n}(p(n,1))$ and reach $p(n,2)=\phi_n(p(n,1))$. 
Item (e) is implied by the following lemma.
\end{proof}
 
\begin{lemma}\label{lemma_inequality}
For each $k\geq 0$, there is $C_k>0$ depending on $\chi$, such that 
\[\norm{\phi_{n}-\mathrm{id}}_{k}\leq C_k \frac{n^{2k}}{2^n}.\]
\end{lemma}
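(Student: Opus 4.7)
The plan is to write $\phi_n(x) - x = x \cdot h_n(x)$, using the identification $\R^2 = \mathbb{C}$ and setting $\alpha_n := 2\pi/2^n$, $g_n(x) := \chi(4n(n|x|-1))$, and $h_n(x) := e^{i\alpha_n g_n(x)} - 1$, and then to bound the $C^k$-norms via two successive applications of the chain rule. By Proposition \ref{prop}(b), $\phi_n - \mathrm{id}$ is supported in $F_n$, so all estimates can be carried out on a set where $|x|$ is comparable to $1/n$. The small factor $\alpha_n = O(2^{-n})$ will supply the $2^{-n}$ in the final bound, whereas each derivative of $g_n$ produces a factor $n^2$, yielding the $n^{2k}$ after $k$ derivatives.

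First, I would bound derivatives of $g_n$. Writing $u_n(x) := 4n(n|x|-1) = 4n^2|x| - 4n$, one has $D^a u_n = 4n^2\, D^a|x|$ for $|a| \geq 1$; combined with the inequality $|D^a|x|| \leq C_k |x|^{1-k}$ (for $|a|=k$) established in the proof of Lemma \ref{lemma_bivector}, this gives $|D^{a'} u_n(x)| \leq C\, n^2\, |x|^{1-|a'|}$ for any $|a'| \geq 1$. Applying the general chain rule to $g_n = \chi \circ u_n$, for $|a| = k$ one obtains a sum
\[
D^a g_n(x) = \sum_{a^1+\cdots+a^i = a} c_{a^\bullet}\, \chi^{(i)}(u_n(x))\, D^{a^1}u_n(x)\cdots D^{a^i}u_n(x),
\]
with $|a^j|\geq 1$. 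The support of $\chi^{(i)}$ forces $|u_n(x)|\in[1/2,1]$, hence $|x|$ lies within $1/(4n^2)$ of $1/n$, so $|x|^{-1} \leq 2n$ there. Therefore, for $|a| = k \geq 1$,
\[
|D^a g_n(x)| \leq C_k \sum_{i=1}^k n^{2i}\,|x|^{i-k} \leq C_k \sum_{i=1}^k n^{2i}\,(2n)^{k-i} \leq C_k\, n^{2k}.
\]

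Next, writing $h_n = H \circ g_n$ with $H(t) := e^{i\alpha_n t} - 1$, so that $H(0) = 0$ and $|H^{(i)}(t)| = \alpha_n^i \leq \alpha_n$ for $i\geq 1$ (and $n$ large), another chain-rule expansion combined with the previous bound gives $|D^a h_n(x)| \leq C_k\, \alpha_n\, n^{2k}$ for $|a|=k\geq 1$, while $|h_n(x)| \leq \alpha_n$. Finally, $\phi_n - \mathrm{id}$ equals $x\cdot h_n$ as a map to $\mathbb{C}$, and since $x$ is linear the product rule yields only terms of the form $x\cdot D^a h_n$ or $(D^{e_l} x)\cdot D^{a-e_l} h_n$. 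On the support of $\phi_n - \mathrm{id}$ one has $|x|\leq 2/n$, so every such term is bounded by $C_k\, \alpha_n\, n^{2k} \leq C_k\, n^{2k}/2^n$, proving the claim. The delicate point is the first step: the rescaling $u_n$ produces a dangerous factor $n^2$ per derivative, which would blow up the $C^k$-norms were it not compensated by the cutoff's support constraint $|x|\sim 1/n$; the inverse-power factor $|x|^{i-k}$ coming from Lemma \ref{lemma_bivector}'s estimate on $D^a|x|$ is precisely what gives back matching powers of $n$, and Faà di Bruno bookkeeping confirms that the worst-case partition ($i=k$) produces exactly $n^{2k}$.
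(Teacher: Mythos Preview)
Your proof is correct and follows essentially the same approach as the paper's: bound the derivatives of the inner composition $\chi(4n(n|x|-1))$ via the chain rule and the estimate $|D^a|x||\leq C|x|^{1-|a|}$ from Lemma~\ref{lemma_bivector}, then apply a second chain rule through the exponential, and finish with the Leibniz rule for the product with $x$. The only cosmetic difference is that the paper absorbs the factor $\alpha_n=2\pi/2^n$ into its function $f_n$ from the outset (so the first step already yields $\|f_n\|_k\leq C_k n^{2k}/2^n$), whereas you keep $g_n$ of size $O(1)$ and introduce $\alpha_n$ only via $H^{(i)}$ in the second step; and in the last step the paper uses the cruder bound $|x|\leq 1$ on $B_1(0)$ instead of your $|x|\leq 2/n$ on $F_n$.
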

\begin{proof}
Denote $f_n(x):=\frac{2\pi}{2^n}i\, \chi\big(4n(n|x|-1)\big)$. First, we prove the following:
\begin{equation}\label{yae}
\norm{f_n}_k\leq C_k\frac{n^{2k}}{2^n}.
\end{equation}
For $k=0$, this is obvious. By the same steps as in the proof of Lemma \ref{lemma_bivector}, we obtain for $a\in \N^2$, with $|a|=k>0$, a constant $C_a>0$ such that 
\begin{equation}\label{chain_rule_2}
    \left|D^a\left(\chi\big(4n(n|x|-1)\big)\right)\right|\leq C_a\sum_{1\leq i\leq k}|\chi^{(i)}\big(4n(n|x|-1)\big)|\, n^{2i} |x|^{i-k}.
\end{equation}
For $|x|<\frac{1}{2n}$, we have that 
$4n(n|x|-1)<-1$, and so $\chi^{(i)}\big(4n(n|x|-1)\big)=0$. For $|x|\geq \frac{1}{2n}$, \eqref{chain_rule_2} gives \eqref{yae} with $C_k$ depending on $k$ and $\norm{\chi}_k$.

Next, we prove the inequality:
\begin{equation}\label{yai}
\norm{e^{f_n}-1}_k\leq C_k\frac{n^{2k}}{2^n}.
\end{equation}
Using that $|e^{f_n(x)}|=1$, we obtain the inequality for $k=0$: 
\[\big|e^{f_n(x)}-1\big|=\big|\int_0^1f_n(x)e^{s f_n(x)}d s\big|\leq |f_n(x)|\leq \frac{2\pi}{2^n}.\]
For $|a|=k>0$, \eqref{yai} follows from the general chain rule and \eqref{yae}:
\[
\big|D^a(e^{f_n(x)}-1)\big|\leq \big|\sum_{a^1+\ldots+a^i}e^{f_{n}(x)}D^{a^1}(f_n(x))\ldots D^{a^i}(f_n(x))\big|\leq C_k\frac{n^{2k}}{2^n}.\]

Finally, the inequality from the statement follows from \eqref{yai}, by writing 
\[D^a(\phi_n(x)-x)=\sum_{a^1+a^2=a}D^{a^1}(x)D^{a^2}\big(e^{f_n(x)}-1\big),\]
and using that because $e^{f_n(x)}-1$ has support in the disk $B_1(0)$, on which $|D^{a^1}(x)|\leq 1$.
\end{proof}

\begin{corollary}\label{coro}
Every neighborhood of the identity in $\mathrm{Poiss}(\Sigma,\pi)$ intersects an uncountable number of path-connected components of $\mathrm{Poiss}(\Sigma,\pi)$. 
\end{corollary}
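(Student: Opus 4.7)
The plan is to turn the single sequence $(\phi_n)$ of Proposition \ref{prop} into a family indexed by arbitrary subsets of a tail of $\mathbb{N}$. For each subset $S\subseteq\{N,N+1,\dots\}$ define
\[\phi_S(x):=\phi_n(x)\ \text{if}\ x\in F_n\ \text{for some}\ n\in S,\qquad \phi_S(x):=x\ \text{otherwise}.\]
The annuli $F_n$ are pairwise disjoint by Lemma \ref{lemma_annuli} and each $\phi_n$ is supported in $F_n$ by Proposition \ref{prop}(b), so this is unambiguous. I will show that for every $C^\infty$-neighborhood $U$ of $\mathrm{id}$ one can choose $N$ so large that every such $\phi_S$ lies in $U$, and that distinct $S$ yield elements of distinct path-components of $\mathrm{Poiss}(\Sigma,\pi)$; since $\{N,N+1,\dots\}$ has $2^{\aleph_0}$ subsets, this furnishes uncountably many components meeting $U$.

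First I would verify that $\phi_S$ is a smooth Poisson diffeomorphism. Away from $0$ every point has a neighborhood meeting at most one $F_n$, so $\phi_S$ agrees locally with a single $\phi_n$ or with the identity and is smooth there. Smoothness at the accumulation point $0$ is the delicate step: Lemma \ref{lemma_inequality} gives $\norm{\phi_n-\mathrm{id}}_k\leq C_k n^{2k}/2^n\to 0$ for every $k$, so all partial derivatives of $\phi_S-\mathrm{id}$ extend continuously to $0$ with value $0$, which is enough, by an inductive application of the fundamental theorem of calculus along radial segments, to conclude that $\phi_S-\mathrm{id}$ is smooth and flat at the origin. The same disjoint-support observation yields
\[\norm{\phi_S-\mathrm{id}}_k\leq \sup_{n\in S}\norm{\phi_n-\mathrm{id}}_k\leq \sup_{n\geq N}C_k n^{2k}/2^n,\]
which can be made to satisfy any prescribed finite collection of $C^k$-bounds by choosing $N$ large. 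That $\phi_S$ is Poisson follows verbatim from the argument of Proposition \ref{prop}(c), applied locally on each $F_n$ with $n\in S$ (where $\phi_S$ agrees with $\phi_n$) and trivially on the complement.

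For the separation of path-components, suppose $S\neq S'$ and pick $n\in S\triangle S'$; consider $\psi:=\phi_S^{-1}\phi_{S'}$, which on $F_n$ equals either $\phi_n$ or $\phi_n^{-1}$, so $\psi(p(n,1))$ is $p(n,2)$ or $p(n,2^n)$ and in either case lies outside $B_{\delta_n}(p(n,1))$. The rank-$2$ locus of $\pi$ is the disjoint union $\bigsqcup_{n,s}B_{\delta_n}(p(n,s))$ of open disks, each a connected component, and the rank-preservation argument of Proposition \ref{prop}(d) forces the orbit of $p(n,1)$ under any Poisson isotopy starting at $\mathrm{id}$ to remain in $B_{\delta_n}(p(n,1))$; hence $\psi$ cannot be joined continuously to $\mathrm{id}$ in $\mathrm{Poiss}(\Sigma,\pi)$, so $\phi_S$ and $\phi_{S'}$ lie in different path-components. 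The main obstacle, I expect, is the smoothness of $\phi_S$ at $0$ for infinite $S$; once the $n^{2k}/2^n$ decay of Lemma \ref{lemma_inequality} is in hand, everything else is a direct deployment of the tools already developed.
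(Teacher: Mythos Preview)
Your proof is correct and follows the same strategy as the paper: assemble uncountably many Poisson diffeomorphisms, indexed by binary sequences on a sufficiently large tail of $\mathbb{N}$, by superposing the disjointly supported $\phi_n$, use the decay estimate of Lemma~\ref{lemma_inequality} to place them all in the given neighborhood, and distinguish path-components via the rank argument of Proposition~\ref{prop}(d). The only cosmetic difference is that the paper obtains $\phi_S$ as the $C^\infty$-limit of the finite compositions $\phi_m^{u_m}\circ\cdots\circ\phi_{n_\epsilon}^{u_{n_\epsilon}}$ (bounding with a sum rather than your sharper $\sup$), thereby handling smoothness at $0$ via convergence in every $C^l$-norm instead of your explicit extension argument.
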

\begin{proof}
Let $\mathcal{U}$ be a neighborhood of the identity in $\mathrm{Poiss}(\Sigma,\pi)$. Then there is $\epsilon>0$ and $k\geq 0$ such that any $\phi\in \mathrm{Poiss}(\Sigma,\pi)$ with $\mathrm{supp}(\phi)\subset B$ and 
$\norm{\phi-\mathrm{id}}_k<\epsilon$ belongs to $\mathcal{U}$. With the notation from Lemma \ref{lemma_inequality}, let $n_{\epsilon}\geq 4$ be such that 
\begin{equation}\label{eq:sarmale}
 C_k\sum_{n\geq n_{\epsilon}}\frac{n^{2k}}{2^n}\leq \frac{\epsilon}{2}.
\end{equation}
Consider any sequence $\overline{u}=\{u_n\}_{n\geq n_{\epsilon}}$, with $u_n\in \{0,1\}$. Since the supports of the diffeomorphisms $\phi_n$ are disjoint, using Lemma \ref{lemma_inequality}, for any $m\geq n\geq n_{\epsilon}$, and any $l\geq 0$ the following holds:
\begin{equation}\label{eq:sarmale2}
\norm{\phi^{u_{m}}_{m}\circ \ldots\circ 
\phi^{u_{n}}_{n}-\mathrm{id}
}_l=\norm{(\phi^{u_{m}}_{m}-\mathrm{id})+\ldots +(\phi^{u_{n}}_{n}-\mathrm{id})
}_l\leq C_l\sum_{i\geq n}\frac{i^{2l}}{2^i}.
\end{equation}
This shows that the sequence $\{\phi^{u_{n}}_{n}\circ \ldots\circ
\phi^{u_{n_{\epsilon}}}_{n_{\epsilon}}\}_{n\geq n_{\epsilon}}$ converges to a Poisson diffeomorphism $\phi^{\overline{u}}$. By \eqref{eq:sarmale} and \eqref{eq:sarmale2}, we have that $\phi^{\overline{u}}\in \mathcal{U}$. Consider two different sequences $\overline{u}\neq \overline{v}$, say $u_n=1$ and $v_n=0$, for some $n$. Then $\phi^{\overline{u}}$ rotates the disks on the circle of radius $1/n$ and $\phi^{\overline{v}}$ does not. By the argument from the proof of item (d) of Proposition \ref{prop}, this implies that $\phi^{\overline{u}}$ and $\phi^{\overline{v}}$ belong to distinct path-components of $\mathrm{Poiss}(\Sigma,\pi)$.
\end{proof}

\section{The second example}

Consider a connected, oriented surface $\Lambda$, and let $\nu$ be a volume form on $\Lambda$. Consider a second compact manifold $\Sigma$ and a positive smooth function $f:\Sigma\to (0,\infty)$. We endow the product $\Sigma\times \Lambda$ with the regular Poisson structure $\pi_{\nu, f}$ whose symplectic leaves are 
\[\big(\{p\}\times \Lambda, f(p)\,\nu\big), \ \ p\in \Sigma.\] 
Note that the group of diffeomorphisms of $\Sigma$ which preserve $f$, denoted by
\[\mathrm{Diff}(\Sigma,f):=\left\{\phi:\Sigma\diffto \Sigma\, : \, f\circ \phi=\phi \right\}\subset \mathrm{Diff}(\Sigma),\]
can be embedded into the group of Poisson diffeomorphisms of $\pi_{\nu, f}$:
\[i:\mathrm{Diff}(\Sigma,f)\hookrightarrow\mathrm{Poiss}(\Sigma\times \Lambda,\pi_{\nu,f}), \ \ \ \ \phi\mapsto \phi\times\mathrm{id}.\]
Moreover, $i$ has a continuous right inverse, which is a group homomorphism
\[r:\mathrm{Poiss}(\Sigma\times \Lambda,\pi_{\nu, f})\to \mathrm{Diff}(\Sigma,f),\]
where $r(\Phi)(p)=q$ if and only if $\Phi$ sends the leaf $\{p\}\times \Lambda$ to the leaf $\{q\}\times \Lambda$; that $r(\Phi)\in \mathrm{Diff}(\Sigma,f)$, follows from the fact that $\Phi$ preserves the volume of the symplectic leaves, and so $f(p)=f(q)$. Thus, if we build a function $f$ such that $\mathrm{Diff}(\Sigma,f)$ is not locally path-connected, then also $\mathrm{Poiss}(\Sigma\times \Lambda,\pi_{\nu, f})$ is not locally path connected.  

\begin{remark}\rm
Let us remark that $\mathrm{Poiss}(\Sigma\times \Lambda,\pi_{\nu, f})$ is locally path-connected precisely when $\mathrm{Diff}(\Sigma,f)$ has this property. First note that the Poisson diffeomorphism group is the semi-direct product:
\[\mathrm{Poiss}(\Sigma\times \Lambda,\pi_{\nu, f})=\mathrm{Fol}(\Sigma\times \Lambda,\pi_{\nu, f})\rtimes i\big(\mathrm{Diff}(\Sigma,f)\big),\]
where $\mathrm{Fol}(\Sigma\times \Lambda,\pi_{\nu, f})$ consists of Poisson diffeomorphisms that send each leaf to itself. This group is a Lie group (hence, locally path-connected) when endowed with the $C^{\infty}$-topology; this follows using \cite{Rybicki}, and that the foliation is simple.
\end{remark}

To obtain an explicit example, we will rely on the work we have done so far. As in the previous section, let $\Sigma$ be a closed surface, and let $B\subset \Sigma$ a closed disk diffeomorphic to $\overline{B}_1(0)$, and write the Poisson structure constructed there as $\pi|_B=u(x_1,x_2)\partial_{x_1}\wedge\partial_{x_2}$. Extend $u$ to $\Sigma$ by $0$ outside of $B$, and let $f=u+1$. It is easy to see that satisfy $\phi_n\in \mathrm{Diff}(\Sigma,f)$. Also, note that $\phi_n$ permutes effectively some connected components of the open set $\{f>1\}$, and therefore it cannot be connected to the identity through a continuous path in $\mathrm{Diff}(\Sigma,f)$. Since $\lim_{n\to\infty}\phi_n=\mathrm{id}$, we obtain that $\mathrm{Diff}(\Sigma,f)$ is not locally path connected. Clearly, also a version of Corollary \ref{coro} holds for $\mathrm{Diff}(\Sigma,f)$. We conclude that any neighborhood of the identity in $\mathrm{Poiss}(\Sigma\times \Lambda,\pi_{\nu, f})$ intersects an uncountable number of path-connected components.

\Addresses
\end{document}